\theoremstyle{definition}
\newtheorem{Def}{Definition}
\theoremstyle{plain}
\newtheorem{Thm}[Def]{Theorem}
\newtheorem{Lem}[Def]{Lemma}
\begin{document}
\title[Bridge decompositions]{Bridge decompositions with distances at least two}
\author[K. Takao]{Kazuto Takao}
\address{Department of Mathematics, Graduate school of science, Osaka University.}
\email{u713544f@ecs.cmc.osaka-u.ac.jp}
\subjclass{57M25, 57N10}
\keywords{knot distance, bridge position}
\begin{abstract}
For $n$-bridge decompositions of links in $S^3$, we propose a practical method to ensure that the Hempel distance is at least two.
\end{abstract}
\maketitle

\section{Introduction}

Hempel distance is a measure of complexity originally defined for Heegaard splittings of $3$-manifolds \cite{hempel}.
The definition can be extended to bridge decompositions of links and it has been successfully applied to knot theory.
For example, extending Hartshorn's \cite{hartshorn} study for Heegaard splittings, Bachman-Schleimer \cite{bachman-schleimer} showed that the distance of a bridge decomposition of a knot bounds from below the genus of any essential surface in the knot exterior.
Extending Scharlemann-Tomova's \cite{scharlemann-tomova} for Heegaard splittings, Tomova \cite{tomova} showed that the distance of a bridge decomposition bounds from below the bridge number of the knot or the Heegaard genus of the knot exterior.

However, it is difficult to calculate the Hempel distance of a general Heegaard splitting or bridge decomposition.
While estimating it from above is a simple task in principle, it is a hard problem to estimate the distance from below.

For a Heegaard splitting, Casson-Gordon \cite{casson-gordon} introduced the rectangle condition to ensure that the distance is at least two.
Lee \cite{lee} gave a weak version of rectangle condition which guarantees the distance to be at least one.
Berge \cite{berge} gave a criterion for a genus two Heegaard splitting which guarantees the distance to be at least three.
Lustig-Moriah \cite{lustig-moriah} also gave a criterion to estimate the distance of a Heegaard splitting from below.

On the other hand, we could not find corresponding results for bridge decompositions in literature.
In this paper, we observe that a bridge decomposition of a link in $S^3$ can be described by a {\it bridge diagram}, and show that the {\it well-mixed condition} for a bridge diagram guarantees the distance to be at least two (see Section \ref{diagram} for definitions).
It may be regarded as a variation of the rectangle condition for Heegaard diagrams.

\begin{Thm}\label{main}
Suppose $(T_+,T_-;P)$ is an $n$-bridge decomposition of a link in $S^3$ for $n\geq 3$.
If a bridge diagram of $(T_+,T_-;P)$ satisfies the well-mixed condition, the Hempel distance $d(T_+,T_-)$ is at least two.
\end{Thm}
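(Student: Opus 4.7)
The plan is to argue by contradiction in the spirit of the Casson--Gordon rectangle-condition proof for Heegaard splittings. Assume $d(T_+,T_-)\le 1$, so that there exist essential disks $D_+\subset B_+\setminus T_+$ and $D_-\subset B_-\setminus T_-$ whose boundary curves $\gamma_\pm\subset P\setminus T$ are either equal or disjoint. The goal is to derive a contradiction from the well-mixed condition.

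The first technical step is to isotope $D_+$ into efficient position with respect to the system of upper bridge disks underlying the $\alpha$-curves of the bridge diagram. Using innermost-disk and outermost-arc arguments on the intersections of $D_+$ with these bridge disks, and boundary-compressing $D_+$ along them whenever possible, I reduce to the case in which each arc of $\gamma_+$ in a complementary region of the $\alpha$-curves is an essential arc joining two of its boundary components. The symmetric argument applied to $D_-$ yields analogous essential arc structure for $\gamma_-$ in the complementary regions of the $\beta$-curves. Thus both $\gamma_+$ and $\gamma_-$ decompose into essential arcs of the respective $\alpha$- and $\beta$-decompositions of $P\setminus T$, and the combinatorics of which boundary components each arc connects is the only data that really matters.

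The crux, and the step I expect to be the main obstacle, is to combine these two arc structures via the well-mixed condition. After putting the $\alpha$- and $\beta$-systems in transverse position, the well-mixed condition should translate into the following: in every intersection region between a complementary region of the $\alpha$-curves and one of the $\beta$-curves, one finds rectangles realizing every possible pairing of boundary components from the two sides. Granted this, I must verify that the pairings actually realized by the arcs of $\gamma_+$ and $\gamma_-$ always select opposite sides of some common rectangle, which forces $\gamma_+$ and $\gamma_-$ to cross there. Even a single transverse intersection point contradicts $\gamma_+\cap\gamma_-=\emptyset$, yielding $d(T_+,T_-)\ge 2$. The hypothesis $n\ge 3$ enters to ensure that $P\setminus T$ is complex enough for the relevant essential arcs and rectangles to exist and for the well-mixed condition to be non-vacuous.
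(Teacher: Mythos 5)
Your overall strategy---contradiction, outermost-arc surgery against bridge disks, and a rectangle-condition-style confrontation with the well-mixed condition---is the right one and matches the paper's in spirit. But the step you yourself flag as ``the crux'' is where the actual content lives, and as stated it does not go through. The well-mixed condition is not a symmetric statement about rectangles in the intersections of complementary regions of an $\alpha$-system and a $\beta$-system; it is an asymmetric statement about the single loop $l$ containing all of $p(\tau _-)$: for each pair of gaps $\delta _i,\delta _j$ of $l\setminus p(\tau _-)$ and each hemisphere $H_\varepsilon $, the parallel family ${\mathcal A}_{i,j,\varepsilon }$ of strands of $p(\tau _+)$ separating $\delta _i$ from $\delta _j$ contains adjacent strands of $p(t_+^r)$ and $p(t_+^s)$ for every pair $r,s$. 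Your symmetric treatment of $D_-$ against individual lower bridge disks discards exactly the structure this condition refers to, and ``rectangles realizing every possible pairing'' is a strictly stronger statement that the well-mixed condition does not supply. Consequently you have no mechanism for producing the forced crossing.

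The paper's mechanism is different and worth contrasting. On the $D_-$ side one uses the single disk $E_-$ bounded by $l$ and containing all of $\tau _-$ to show (after minimizing $|\partial D_-\cap l|$) that $\partial D_-$ contains an arc in some $H_\varepsilon $ running from $\delta _i$ to $\delta _j$; such an arc must cross the entire parallel family ${\mathcal A}_{i,j,\varepsilon }$ consecutively, so by well-mixedness $\partial D_-$ directly connects $p(t_+^r)$ to $p(t_+^s)$ for \emph{every} pair $r,s$. On the $D_+$ side, the outermost-arc argument against the upper bridge disks shows that any essential curve disjoint from $\partial D_+$ (in particular $\partial D_-$) fails to directly connect $p(t_+^r)$ to $p(t_+^s)$ for \emph{some} pair $r,s$, because $D_+$ (or a boundary-compression of it) separates the upper bridge disks into two nonempty classes. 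These two conclusions contradict each other with no need to exhibit a transverse intersection point of $\gamma _+$ and $\gamma _-$. Your proposal is missing the first half of this pincer entirely. A second, smaller gap: you minimize $|\partial D_+\cap p(\tau _+)|$ and put $D_-$ in efficient position by separate isotopies, but never check that both minimality conditions can be achieved \emph{simultaneously} while keeping $D_+$ and $D_-$ disjoint; the paper devotes a lemma to exactly this point, using the minimality of $|l\cap p(\tau _+)|$ to push one boundary curve out of the other's bigon without creating new intersections.
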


Recently, Masur-Schleimer \cite{masur-schleimer} found an algorithm to calculate the Hempel distance of a Heegaard splitting with a bounded error term.
The author imagine that their algorithm may also be appliable to bridge decompositions.
However, the point of our result is its practicality: for any given bridge decomposition, we can easily obtain a bridge diagram and check whether it satisfies the well-mixed condition.

\section{Bridge decompositions and the Hempel distance}

Suppose $L$ is a link in $S^3$ and $P$ is a $2$-sphere dividing $S^3$ into two $3$-balls $B_+$ and $B_-$.
Assume that $L$ intersects $P$ transversally and let $\tau _\varepsilon $ be the intersection of $L$ with $B_\varepsilon $ for each $\varepsilon =\pm $.
That is to say, $(S^3,L)$ is decomposed into $T_+:=(B_+,\tau _+)$ and $T_-:=(B_-,\tau _-)$ by $P$.
We call the triple $(T_+,T_-;P)$ an {\it $n$-bridge decomposition} of $L$ if each $T_\varepsilon $ is an $n$-string trivial tangle.
Here, $T_\varepsilon $ is called an {\it $n$-string trivial tangle} if $\tau _\varepsilon $ consists of $n$ arcs parallel to the boundary of $B_\varepsilon $.
Obviously $1$-bridge decompositions are possible only for the trivial knot, so we assume $n\geq 2$ in this paper.

Consider a properly embedded disk $D$ in $B_\varepsilon $.
We call $D$ an {\it essential disk} of $T_\varepsilon $ if $\partial D$ is essential in the surface $\partial B_\varepsilon \setminus \tau _\varepsilon $ and $D$ is disjoint from $\tau _\varepsilon $.
Here, a simple closed curve on a surface is said to be {\it essential} if it neither bounds a disk nor is peripheral in the surface.
Note that essential disks of $T_+$ and $T_-$ are bounded by some essential simple closed curves on the $2n$-punctured sphere $P\setminus L$.

The essential simple closed curves on $P\setminus L$ form a $1$-complex ${\mathcal C}(P\setminus L)$, called the {\it curve graph} of $P\setminus L$.
The vertices of ${\mathcal C}(P\setminus L)$ are the isotopy classes of essential simple closed curves on $P\setminus L$ and a pair of vertices spans an edge of ${\mathcal C}(P\setminus L)$ if the corresponding isotopy classes can be realized as disjoint curves.
In the case of $n=2$, this definition makes the curve graph a discrete set of points and so a slightly different definition is used.

The {\it Hempel distance} (or just the {\it distance}) of $(T_+,T_-;P)$ is defined by
$$d(T_+,T_-):={\rm min}\{ d([\partial D_+],[\partial D_-])\mid D_\varepsilon \text{ is an essential disk of }T_\varepsilon .\ (\varepsilon =\pm )\} $$
where $d([\partial D_+],[\partial D_-])$ is the minimal distance between $[\partial D_+]$ and $[\partial D_-]$ measured in ${\mathcal C}(P\setminus L)$ with the path metric.
Because the curve graph is connected \cite{masur-minsky1}, the distance $d(T_+,T_-)$ is a finite non-negative integer.

For $2$-bridge decompositions, there is a unique essential disk for each of the $2$-string trivial tangles.
Moreover, the curve graph of a $4$-punctured sphere is well understood (see Sections 1.5 and 2.1 in \cite{masur-minsky2} for example) and so we can calculate the exact distance.

Suppose $(T_+,T_-;P)$ is an $n$-bridge decomposition of a link $L$ for $n\geq 3$.
If $d(T_+,T_-)=0$, there are essential disks $D_+,D_-$ of $T_+,T_-$, respectively, such that $[\partial D_+]=[\partial D_-]$.
We can assume $\partial D_+=\partial D_-$ indeed and so $D_+\cup D_-$ is a $2$-sphere in $S^3$.
Therefore, $(T_+,T_-;P)$ is separated by the sphere into an $m$-bridge decomposition and an $(n-m)$-bridge decomposition of sublinks of $L$.
By the definition of essential disks, $m$ is more than $0$ and less than $n$.
Conversely, we can conclude that the distance is at least one if $(T_+,T_-;P)$ is not a such one.

\section{Bridge diagrams and the well-mixed condition}\label{diagram}

Suppose $(T_+,T_-;P)$ is an $n$-bridge decomposition of a link $L$ in $S^3$ and $T_+=(B_+,\tau _+),T_-=(B_-,\tau _-)$.
For each $\varepsilon =\pm $, the $n$ arcs of $\tau _\varepsilon $ can be disjointly projected into $P$.
Let $p:L\rightarrow P$ be such a projection.
A {\it bridge diagram} of $(T_+,T_-;P)$ is a diagram of $L$ obtained from $p(\tau _+)$ and $p(\tau _-)$.
In the terminology of \cite{crowell-fox}, $\tau _+,\tau _-$ are the overpasses and the underpasses of $L$.

Note that the boundary of a regular neighborhood of each arc of $p(\tau _\varepsilon )$ in $P$ bounds an essential disk of $T_\varepsilon $ separating an arc of $\tau _\varepsilon $.
In this sense a bridge diagram represents a family of essential disks of $T_+,T_-$.
So we can think of it as something like a Heegaard diagram for a Heegaard splitting.

It is well known that a bridge decomposition is displayed as a ``plat" as in Figure \ref{fig_braid} (See \cite{birman}).
Now we describe how to convert a plat presentation to a bridge diagram.
For example, consider a $3$-bridge decomposition with a plat presentation as in the left of Figure \ref{fig_example}.
Here $P$ can be isotoped onto any height, so start with $P$ in the position $P_s$.
The top in the right of Figure \ref{fig_example} illustrates a view of a canonical projection of the arcs $t_+^1,t_+^2,t_+^3$ on $P$ from $B_+$ side.
In our pictures, $p(t_+^1),p(t_+^2),p(t_+^3)$ are represented by a solid line, a dotted line, a broken line, respectively.
Shifting $P$ to the position $P_1$, the projections are as the second in the right of Figure \ref{fig_example}.
Shifting $P$ further to the position $P_2$, the projections are as the third.
By continuing this process, the projections are as in Figure \ref{fig_diagram_5} when $P$ is in the position $P_g$.
Then we can find a canonical projection of the arcs $t_-^1,t_-^2,t_-^3$ and obtain a bridge diagram.
\begin{figure}[ht]
\vspace{10pt}
\includegraphics[width=120pt]{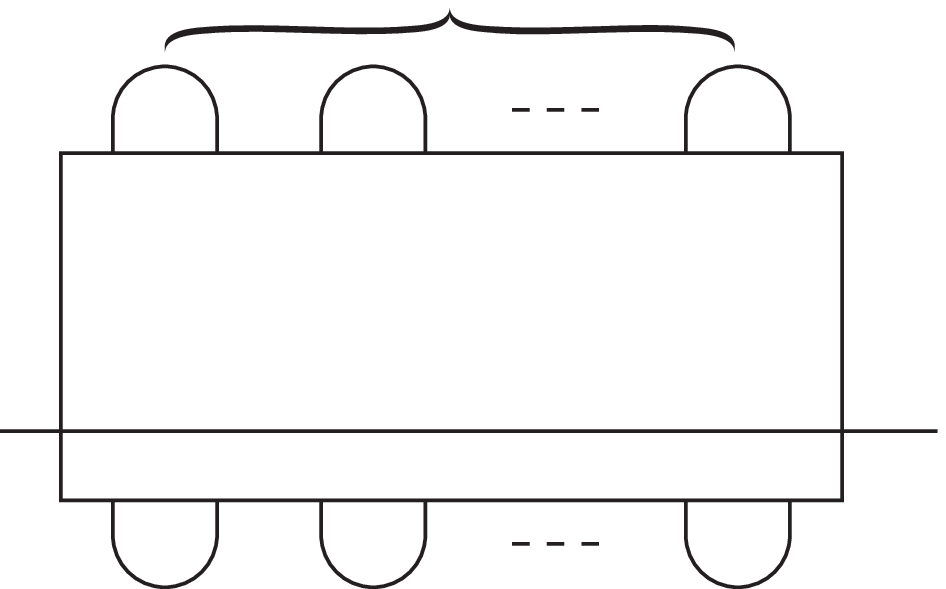}\\
\vspace{-91pt}
\hspace{-7pt}$n$\\[37pt]
\hspace{45pt}$2n$-braid\hspace{35pt}$B_+$\\[1pt]
\hspace{133pt}$P$\\[1pt]
\hspace{120pt}$B_-$
\caption{}
\label{fig_braid}
\end{figure}
\begin{figure}[ht]
\begin{minipage}{160pt}
\begin{center}
\includegraphics[width=120pt]{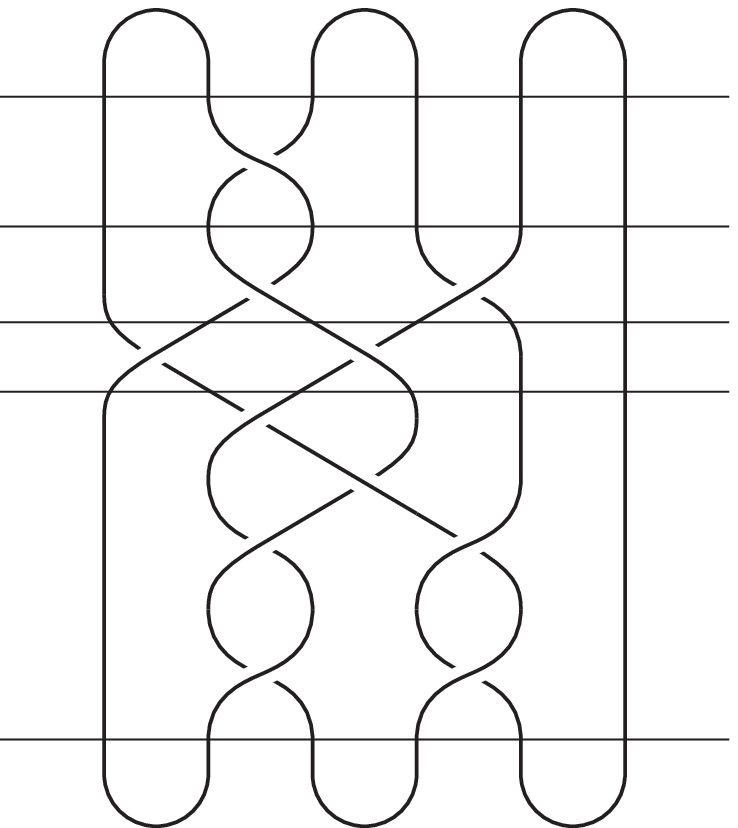}
\end{center}
\vspace{-155pt}
\hspace{41pt}$t_+^1$\hspace{23pt}$t_+^2$\hspace{23pt}$t_+^3$\\[12pt]
\hspace*{140pt}$P_s$\\[9pt]
\hspace*{140pt}$P_1$\\[4pt]
\hspace*{140pt}$P_2$\\
\hspace*{140pt}$P_3$\\
\hspace*{143pt}$\vdots$\\[27pt]
\hspace*{140pt}$P_g$\\[12pt]
\hspace*{41pt}$t_-^1$\hspace{23pt}$t_-^2$\hspace{23pt}$t_-^3$\\
\end{minipage}
\begin{picture}(20,100)(0,0)
\put(-5,70){\line(4,1){30}}
\put(-5,45){\line(1,0){30}}
\put(-5,28){\line(2,-1){30}}
\put(-5,15){\line(2,-3){30}}
\end{picture}
\begin{minipage}{120pt}
\vspace{-38pt}
\hspace{16pt}$p(t_+^1)$\hspace{10pt}$p(t_+^2)$\hspace{10pt}$p(t_+^3)$\\[-16pt]
\begin{center}
\includegraphics[width=100pt]{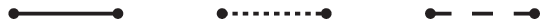}\\[13pt]
\includegraphics[width=100pt]{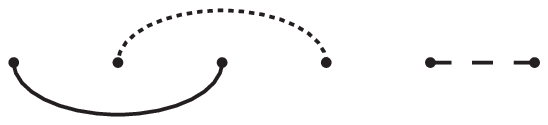}\\[13pt]
\includegraphics[width=100pt]{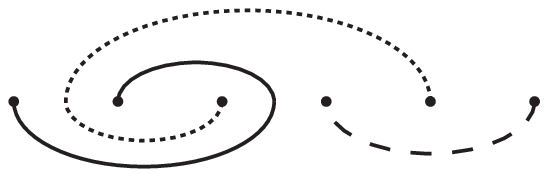}\\[13pt]
\includegraphics[width=100pt]{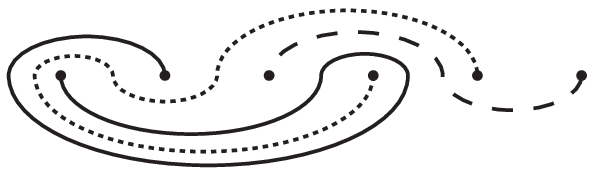}
\end{center}
\end{minipage}
\vspace{-15pt}
\caption{}
\label{fig_example}
\end{figure}
\begin{figure}[ht]
\includegraphics[width=280pt]{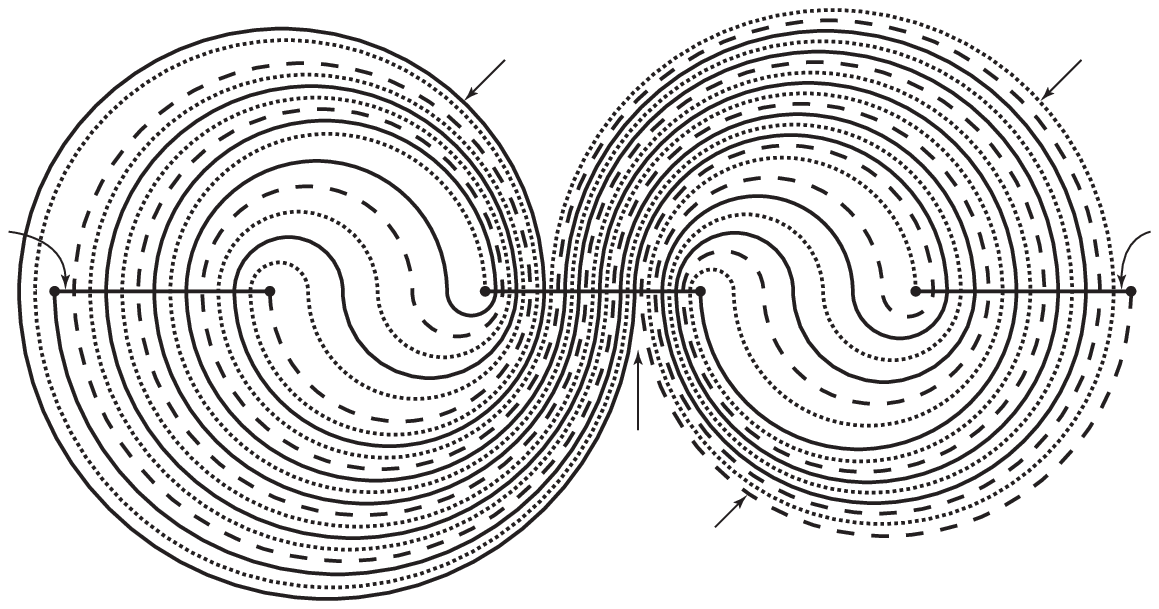}\\
\vspace{-145pt}
\hspace{114pt}$p(t_+^1)$\hspace{110pt}$p(t_+^2)$\\[34pt]
$p(t_-^1)$\hspace{268pt}$p(t_-^3)$\\[41pt]
\hspace{28pt}$p(t_-^2)$\\[9pt]
\hspace{48pt}$p(t_+^3)$
\caption{}
\label{fig_diagram_5}
\end{figure}

Next we study the distance of this $3$-bridge decomposition.
Since the link $L$ is connected, the bridge decomposition cannot be separated into smaller ones.
It follows that the distance is at least one.
Consider the simple closed curve $c$ as in Figure \ref{fig_curve}.
The curve $c$ is essential in $P\setminus L$ and disjoint from both $p(t_+^1)$ and $p(t_-^1)$.
Recall that the boundary of a small neighborhood of $p(t_+^1),p(t_-^1)$ in $P$ bounds an essential disk $D_+^1$ of $T_+$ and an essential disk $D_-^1$ of $T_-$, respectively.
So there are an edge between $[\partial D_+^1],[c]$ and an edge between $[c],[\partial D_-^1]$ in the curve graph ${\mathcal C}(P\setminus L)$.
By definition, the distance is at most two.
It is true that there is no direct edge between $[\partial D_+^1]$ and $[\partial D_-^1]$.
However, this is not enough to conclude that the distance is equal to two because there are infinitely many essential disks of $T_+,T_-$ other than $D_+^1,D_-^1$.
\begin{figure}[ht]
\includegraphics[width=280pt]{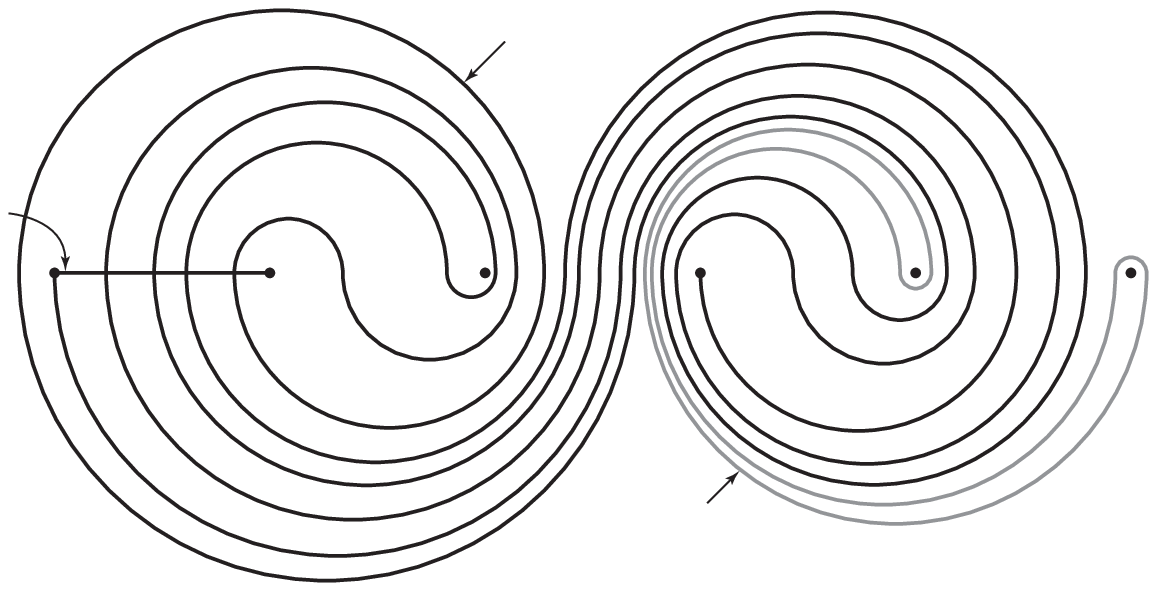}\\
\vspace{-147pt}
\hspace{-18pt}$p(t_+^1)$\\[34pt]
\hspace{-292pt}$p(t_-^1)$\\[58pt]
\hspace{48pt}$c$
\caption{}
\label{fig_curve}
\end{figure}

As shown in \cite{berge}, \cite{casson-gordon}, \cite{lee} and \cite{lustig-moriah}, sufficiently complicated Heegarrd diagram implies a large distance of the Heegaard splitting.
We can expect that sufficiently complicated bridge diagram also implies a large distance of the bridge decomposition.
A bridge diagram should be pretty complicated if it satisfies the {\it well-mixed condition}, which we define in the following.

Denote the arcs of each $\tau _\varepsilon $ by $t_\varepsilon ^1,t_\varepsilon ^2,\ldots ,t_\varepsilon ^n$.
Let $l$ be a loop on $P$ containing $p(\tau _-)$ such that $p(t_-^1),p(t_-^2),\ldots ,p(t_-^n)$ are located in $l$ in this order.
We can assume that $p(\tau _+)$ has been isotoped in $P\setminus L$ to have minimal intersection with $l$.
For the bridge diagram of Figure \ref{fig_diagram_5}, it is natural to choose $l$ to be the closure in $P\cong S^2$ of the horizontal line containing $p(t_-^1)\cup p(t_-^2)\cup p(t_-^3)$.
Let $H_+,H_-\subset P$ be the hemi-spheres divided by $l$ and let $\delta _i$ ($1\leq i\leq n$) be the component of $l\setminus p(\tau _-)$ which lies between $p(t_-^i)$ and $p(t_-^{i+1})$.
(Here the indices are considered modulo $n$.)
Let ${\mathcal A}_{i,j,\varepsilon }$ be the set of components of $p(\tau _+)\cap H_\varepsilon $ separating $\delta _i$ from $\delta _j$ in $H_\varepsilon $ for a distinct pair $i,j\in \{ 1,2,\ldots ,n\} $ and $\varepsilon \in \{ +,-\} $.
For example, Figure \ref{fig_diagram_6} displays ${\mathcal A}_{1,2,+}$ for the above bridge diagram.
Note that ${\mathcal A}_{i,j,\varepsilon }$ consists of parallel arcs in $H_\varepsilon $.
\begin{figure}[ht]
\includegraphics[width=280pt]{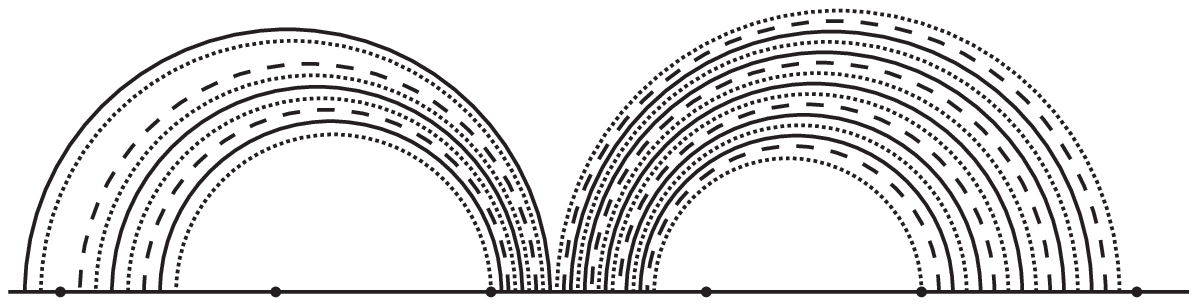}\\
\vspace{-50pt}
\hspace{270pt}$H_+$\\[17pt]
\hspace{286pt}$l$\\[-4pt]
\hspace{26pt}$p(t_-^1)$\hspace{35pt}$\delta _1$\hspace{33pt}$p(t_-^2)$\hspace{35pt}$\delta _2$\hspace{32pt}$p(t_-^3)$\hspace{17pt}$\delta _3$
\caption{}
\label{fig_diagram_6}
\end{figure}

\begin{Def}
\begin{enumerate}
\item A bridge diagram satisfies the {\it $(i,j,\varepsilon )$-well-mixed condition} if in ${\mathcal A}_{i,j,\varepsilon }\subset H_\varepsilon $, a subarc of $p(t_+^r)$ is adjacent to a subarc of $p(t_+^s)$ for all distinct pair $r,s\in \{ 1,2,\ldots ,n\} $.
\item A bridge diagram satisfies the {\it well-mixed condition} if it satisfies the $(i,j,\varepsilon )$-well-mixed condition for all combinations of a distinct pair $i,j,\in \{ 1,2,\ldots ,n\} $ and $\varepsilon \in \{ +,-\} $.
\end{enumerate}
\end{Def}

As in Figure \ref{fig_diagram_6}, the bridge diagram in Figure \ref{fig_diagram_5} amply satisfies the $(1,2,+)$-well-mixed condition.
One can also check the $(i,j,\varepsilon )$-well-mixed condition for all the other combinations $(i,j,\varepsilon )=(1,2,-),(2,3,+),(2,3,-),(3,1,+),(3,1,-)$.
Hence the bridge diagram in Figure \ref{fig_diagram_5} satisfies the well-mixed condition.

\section{Proof of the theorem}

Firstly, consider an essential disk $D_-$ of $T_-$.
Assume that $D_-$ has been isotoped so that $|\partial D_-\cap l|$ is minimal.
Here, $|\cdot |$ denotes the number of connected components of a topological space.

\begin{Lem}
There exist a distinct pair $i,j\in \{ 1,2,\ldots ,n\} $ and $\varepsilon \in \{ +,-\} $ such that $\partial D_-$ includes a subarc connecting $\delta _i$ and $\delta _j$ in $H_\varepsilon $.
\end{Lem}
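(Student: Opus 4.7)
The plan is to establish three facts in sequence and combine them. First, $\partial D_- \cap l$ is nonempty: if $\partial D_-$ were disjoint from $l$, it would lie in a single hemisphere $H_\varepsilon$ and bound a disk there, and since every puncture of $P \setminus L$ is on $l = \partial H_\varepsilon$, this disk would contain no puncture, contradicting the essentiality of $\partial D_-$.

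Second, at the minimum no arc of $\partial D_- \cap H_\varepsilon$ has both endpoints on the same $\delta_i$. An innermost such arc $\alpha$ in $H_\varepsilon$ would bound, with a subarc of $\delta_i$, a disk $E \subset H_\varepsilon$ that is puncture-free (since the interior of $\delta_i$ has no puncture and all punctures lie on $l$) and, by innermost-ness, contains no other arc of $\partial D_- \cap H_\varepsilon$. Isotoping $\partial D_-$ across $E$, extending via a collar of $P$ to an isotopy of $D_-$ in $(B_-,\tau_-)$, would reduce $|\partial D_- \cap l|$ by $2$, contradicting minimality.

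Third, I use the triviality of $T_-$ to show $\partial D_- \cap p(t_-^k) = \emptyset$ at the minimum for every $k$. Each $t_-^k$ is parallel rel its endpoints to $p(t_-^k)$, so one can choose a disk $E_k \subset B_-$ with $\partial E_k = t_-^k \cup p(t_-^k)$ and disjoint from $\tau_- \setminus t_-^k$. The intersection $D_- \cap E_k$ is a $1$-manifold whose circle components can be removed by standard innermost-circle isotopies of $D_-$ inside $B_- \setminus \tau_-$ (using that $B_-$ is a ball), without changing $\partial D_-$. Each remaining arc component has both endpoints on $p(t_-^k)$ since $D_-$ avoids $t_-^k$; an innermost such arc cuts off a subdisk $E'_k \subset E_k$ bounded by the arc and a subarc of $p(t_-^k)$, with interior disjoint from $D_-$ and $\tau_-$, so the standard bigon isotopy of $D_-$ through $E'_k$ removes two endpoints from $\partial D_- \cap p(t_-^k)$, reducing $|\partial D_- \cap l|$ by $2$, again a contradiction.

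Combining the three facts, at the minimum every intersection of $\partial D_-$ with $l$ lies on some $\delta_i$, at least one arc of $\partial D_- \cap H_\varepsilon$ exists, and by the second fact its two endpoints lie on distinct $\delta_i, \delta_j$. I expect the main obstacle to be the third step, where the parallelism disk $E_k$ must be chosen compatibly with the other tangle arcs and the simplification of $D_- \cap E_k$ via innermost-circle and innermost-arc reductions must be carried out so that the isotopies stay inside $(B_-,\tau_-)$ throughout.
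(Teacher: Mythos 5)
Your steps (1) and (2) are sound, but step (3) is a genuine gap, and unfortunately it is fatal: the claim you are proving there is false. The move you call ``the standard bigon isotopy of $D_-$ through $E_k'$'' is not an isotopy. The disk $E_k'$ meets $D_-$ in an arc $\gamma$ and has the rest of its boundary $\mu$ on $P$; pushing $D_-$ across such a disk is a boundary compression, which replaces $D_-$ by the two disks $D_1\cup E_k'$ and $D_2\cup E_k'$ (where $\gamma$ cuts $D_-$ into $D_1\cup D_2$). It is an isotopy only if the ball that $D_1\cup E_k'$ cuts off from $B_-$ avoids $\tau_-$ and $D_2$, which you cannot guarantee. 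Concretely, for $n=3$ put the punctures at $1,\dots ,6$ on $l$ with $p(t_-^k)=[2k-1,2k]$, and let $c$ cross $l$ at $0,\ 2.5,\ 3.3,\ 3.7,\ 4.5,\ 7$, with arcs $(0,2.5),(3.3,7),(3.7,4.5)$ in $H_+$ and $(0,3.7),(2.5,3.3),(4.5,7)$ in $H_-$. This is an embedded essential curve; every arc of $l$ cut off by a matched adjacent pair of crossings contains a puncture, so there is no bigon and $|c\cap l|=6$ is already minimal; yet the two crossings of $p(t_-^2)$ have opposite signs and are the only letters of the word $c$ reads in $\pi_1(B_-\setminus \tau _-)\cong F_3$, so the word is $m_2m_2^{-1}=1$ and, by Dehn's lemma, $c$ bounds an essential disk of $T_-$. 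Hence a minimal-position $\partial D_-$ can meet $p(\tau _-)$, and without step (3) your final combination collapses, since steps (1)+(2) alone allow every arc of $\partial D_-\setminus l$ to have an endpoint on $p(\tau _-)$.

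The repair is essentially the paper's argument: use the single disk $E_-\subset B_-$ with $\partial E_-=l$ and $\tau _-\subset E_-$, and take an arc of $D_-\cap E_-$ that is outermost in $D_-$ rather than in $E_-$ or $E_k$. Outermostness in $D_-$ forces the corresponding subarc $\rho $ of $\partial D_-$ to have interior disjoint from $l$, so $\rho $ is a single arc in one hemisphere. If both endpoints of $\rho $ lie in one component of $l\setminus L$, then $\rho $ together with a puncture-free subarc of $l$ gives a genuine two-dimensional bigon, contradicting minimality of $|\partial D_-\cap l|$ (this is your step (2), extended to the components inside $p(\tau _-)$ as well as the $\delta _i$). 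An endpoint of $\rho $ cannot lie on some $p(t_-^k)$ with the other endpoint elsewhere, because the arc of $D_-\cap E_-$ would then be trapped in the parallelism region between $t_-^k$ and $p(t_-^k)$ and would have both endpoints on $p(t_-^k)$, reducing to the previous case. The only surviving possibility is that $\rho $ joins distinct $\delta _i$ and $\delta _j$, which is the lemma. Note that this argument only produces one good subarc of $\partial D_-$; it does not (and cannot) show that all of them are good.
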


\begin{proof}
Since the arcs of $\tau _-$ are projected to subarcs of $l$, there exists a disk $E_-$ in $B_-$ such that $\partial E_-=l$ and $\tau _-\subset E_-$.
The essential disk $D_-$ must have non-empty intersection with $E_-$.
The closed components of $D_-\cap E_-$ can be eliminated by an isotopy of ${\rm Int}D_-$.
Then $D_-\cap E_-$ is a non-empty family of properly embedded arcs in $D_-$.
Consider an outermost subdisk $D_-^0$ of $D_-$ cut off by an arc of them.
For the minimality of $|\partial D_-\cap l|$, we can see that $\partial D_-^0\cap \partial D_-$ connects $\delta _i$ and $\delta _j$ in $H_\varepsilon $ for a distinct pair $i,j\in \{ 1,2,\ldots ,n\} $ and $\varepsilon \in \{ +,-\} $.
\end{proof}

Secondly, consider an essential disk $D_+$ of $T_+$.
Assume that $D_+$ has been isotoped so that $|\partial D_+\cap p(\tau _+)|$ is minimal.

\begin{Lem}
Suppose $c$ is an essential simple closed curve on $P\setminus L$ disjoint from $\partial D_+$.
There exist a distinct pair $r,s\in \{ 1,2,\ldots ,n\} $ such that no subarc of $c$ connects $p(t_+^r)$ and $p(t_+^s)$ directly (i.e. its interior is disjoint from $p(\tau _+)$).
\end{Lem}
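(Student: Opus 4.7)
The plan is to argue by contradiction: assume that for every distinct pair $r,s\in\{1,\ldots,n\}$ some subarc of $c$ with interior disjoint from $p(\tau_+)$ has its endpoints on $p(t_+^r)$ and $p(t_+^s)$ respectively.

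First I would analyse the global position. Since $c$ is disjoint from $\partial D_+$ and both are simple closed curves in $P\cong S^2$, the curve $c$ lies in one of the two closed disks into which $\partial D_+$ divides $P$; call these $\Delta$ (containing $c$) and $\Delta'$. Because $D_+$ is an essential disk of the trivial tangle $T_+$ and is disjoint from $\tau_+$, it separates $B_+$ into two $3$-balls each containing a nonempty subcollection of the arcs of $\tau_+$; in particular the two endpoints of every $p(t_+^r)$ lie on the same side of $\partial D_+$ in $P$. Essentiality of $\partial D_+$ ensures both sides are nonempty; let $S'$ denote the set of indices $r$ whose endpoints lie in $\Delta'$, so that $\emptyset\neq S'\subsetneq\{1,\ldots,n\}$.

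Next I would split into cases. If some $r\in S'$ satisfies $p(t_+^r)\cap\partial D_+=\emptyset$, then $p(t_+^r)$ lies entirely in $\Delta'$ and is disjoint from $c\subset\Delta$; hence no subarc of $c$ has an endpoint on $p(t_+^r)$, and any $s\neq r$ yields a pair $(r,s)$ witnessing the conclusion of the lemma, contradicting the assumption. So we may suppose every $r\in S'$ has $p(t_+^r)$ crossing $\partial D_+$. The minimality of $|\partial D_+\cap p(\tau_+)|$ then forces each maximal sub-arc of $p(t_+^r)$ lying in $\Delta$ (with both endpoints on $\partial D_+$) to cobound, together with a sub-arc of $\partial D_+$, a disk in $\Delta$ containing at least one puncture of $L$; otherwise an isotopy across such a puncture-free bigon would strictly reduce $|\partial D_+\cap p(\tau_+)|$.

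The remaining case is where I expect the main obstacle. There, each $r\in S'$ contributes at least one excursion of $p(t_+^r)$ into $\Delta$ whose bigon encloses an $L$-puncture in $\Delta$, and the $L$-punctures available in $\Delta$ are precisely the endpoints of the arcs $p(t_+^s)$ with $s\notin S'$. The well-mixed assumption on $c$ then constrains how the simple closed curve $c\subset\Delta$ must weave among these excursions in order to furnish direct subarcs between every pair of projected arcs. A careful topological accounting should then produce the contradiction; the most plausible routes are (i) an Euler-characteristic computation on $P$ with respect to the cell structure induced by $p(\tau_+)\cup\partial D_+\cup c$, or (ii) an outermost-arc argument on $D_+\cap F_+$ where $F_+\subset B_+$ is a disk containing $\tau_+$ with $\partial F_+\subset P$, in the spirit of the previous lemma's use of $E_-$.
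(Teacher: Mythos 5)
Your setup is sound as far as it goes: the observation that $\partial D_+$ splits $P$ into two disks $\Delta,\Delta'$, that each $p(t_+^r)$ has both endpoints on one side, that essentiality makes both sides nonempty, and that minimality of $|\partial D_+\cap p(\tau_+)|$ rules out puncture-free bigons are all correct. The quick case (some $p(t_+^r)$ entirely on the side opposite $c$) is also handled correctly. But the proof stops exactly where the lemma's content begins: in the main case you write that ``a careful topological accounting should then produce the contradiction'' and offer two possible routes without carrying out either. That is the whole difficulty, so as it stands this is a genuine gap, not a complete argument. It is also not clear that your contradiction hypothesis (a direct subarc of $c$ for \emph{every} pair $r,s$) is the right thing to exploit; the statement is really about finding a single pair $r,s$ that a curve disjoint from $\partial D_+$ cannot connect, and nothing in your sketch uses the structure of $c$ beyond its disjointness from $\partial D_+$.

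The missing idea, which is the paper's argument, is to work in $B_+$ rather than in $P$: take pairwise disjoint parallelism disks $E_+^1,\ldots,E_+^n$ between $t_+^i$ and $p(t_+^i)$, remove closed components of $D_+\cap\bigl(\bigcup_i E_+^i\bigr)$, and then either $D_+$ is disjoint from all the $E_+^i$ and itself separates them into two nonempty classes, or an outermost subdisk $D_+^0$ of $D_+$, cut off by an arc of $D_+\cap E_+^k$, together with $E_+^k$ separates the remaining $n-1$ disks into two classes, both nonempty precisely because $|\partial D_+\cap p(t_+^k)|$ is minimal. Choosing $r$ and $s$ from opposite classes, any arc in $P$ from $p(t_+^r)$ to $p(t_+^s)$ whose interior misses $p(\tau_+)$ would have to cross $\partial D_+$, which $c$ does not. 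Note that your route (ii) gestures in this direction but with a single disk $F_+$ containing all of $\tau_+$; that object does not let you distinguish the individual arcs $p(t_+^r)$, so you would still need the separate disks $E_+^i$ to extract the pair $r,s$. Your route (i), an Euler characteristic count, is not developed enough to evaluate.
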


\begin{proof}
Let $E_+^i$ be a disk of parallelism between $t_+^i$ and $p(t_+^i)$ for each $i=1,2,\ldots ,n$ so that $E_+^1,E_+^2,\ldots ,E_+^n$ are pairwise disjoint.
The closed components of $D_+\cap (E_+^1\cup E_+^2\cup \cdots \cup E_+^n)$ can be eliminated by an isotopy of ${\rm Int}D_+$.
If $D_+\cap (E_+^1\cup E_+^2\cup \cdots \cup E_+^n)$ is empty, $D_+$ separates the $n$ disks $E_+^1,E_+^2,\ldots ,E_+^n$ into two classes in $B_+$.
Since $D_+$ is essential, both these classes are not empty.
If $D_+\cap (E_+^1\cup E_+^2\cup \cdots \cup E_+^n)$ is not empty, it consists of properly embedded arcs in $D_+$.
Consider an outermost subdisk $D_+^0$ of $D_+$ cut off by an arc of them, say, an arc of $D_+\cap E_+^k$.
Then, $D_+^0\cup E_+^k$ separates the $(n-1)$ disks $E_+^1,\ldots ,E_+^{k-1},E_+^{k+1},\ldots ,E_+^n$ into two classes in $B_+$.
Since $|\partial D_+\cap p(t_+^k)|$ is minimal, both these classes are not empty.
Anyway, by choosing $r$ and $s$ from the indexes of the disks of separated classes, the lemma follows.
\end{proof}

Assume that the distance of $(T_+,T_-;P)$ is less than two.
There are disjoint essential disks $D_+,D_-$ of $T_+,T_-$, respectively.
If $\partial D_-$ contains a subarc connecting $\delta _i$ and $\delta _j$ in $H_\varepsilon $, it intersects all the arcs of ${\mathcal A}_{i,j,\varepsilon }$.
In particular, if two arcs of ${\mathcal A}_{i,j,\varepsilon }$ are adjacent in $H_\varepsilon $, a subarc of $\partial D_-$ connects them directly.
The above observations and the well-mixed condition are almost enough to lead to a contradiction, but only the following should be checked:

\begin{Lem}
The disks $D_+$ and $D_-$ can be isotoped preserving the disjointness so that $|\partial D_+\cap p(\tau _+)|$ and $|\partial D_-\cap l|$ are minimal.
\end{Lem}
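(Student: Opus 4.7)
The plan is to reduce the statement to a two-dimensional problem on the punctured sphere $P \setminus L$ and resolve it by bigon elimination. Since $D_+ \subset B_+$ and $D_- \subset B_-$ are each disjoint from their respective tangle arcs, the disks $D_+$ and $D_-$ are disjoint if and only if $\partial D_+ \cap \partial D_- = \emptyset$ on $P$, and essential disks in $T_\pm$ are classified up to isotopy by their boundary curves in $P \setminus L$. So the lemma reduces to the following: given disjoint representatives $\gamma_\pm$ of the isotopy classes $[\partial D_\pm]$ in $P \setminus L$, find new disjoint representatives with $|\gamma_+ \cap p(\tau_+)|$ and $|\gamma_- \cap l|$ individually minimal.

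Among disjoint pairs $(\gamma_+, \gamma_-)$ in the given isotopy classes, I would take one minimizing the complexity $c := |\gamma_+ \cap p(\tau_+)| + |\gamma_- \cap l|$ and argue each summand is individually minimal. Suppose not; without loss of generality $|\gamma_+ \cap p(\tau_+)|$ is not minimal. The bigon criterion on the punctured sphere $P \setminus L$ then provides an innermost bigon $B$ bounded by $\alpha \subset \gamma_+$ and $\beta \subset p(t_+^k)$ with interior disjoint from $L \cup \gamma_+ \cup p(\tau_+)$. If the interior of $B$ also avoids $\gamma_-$, pushing $\gamma_+$ across $B$ strictly decreases $c$ while preserving disjointness, contradicting the minimization. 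Otherwise each component of $\gamma_- \cap B$ is an arc with both endpoints on $\beta$ (since $\gamma_+ \cap \gamma_- = \emptyset$), and an outermost such arc $\alpha' \subset \gamma_-$ cuts off a sub-bigon $B' \subset B$ with $\partial B' = \alpha' \cup \beta'$, $\beta' \subset \beta$, and interior disjoint from $\gamma_\pm \cup p(\tau_+)$; pushing $\gamma_-$ across $B'$ preserves $[\gamma_-]$ in $P \setminus L$, and disjointness from $\gamma_+$ is maintained because the interior of $\beta$ is disjoint from $\gamma_+$ by the innermost choice of $B$.

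The main obstacle is that the sub-bigon push changes $|\gamma_- \cap l|$ by $|\beta' \cap l| - |\alpha' \cap l|$, which may be positive, so the plain complexity $c$ need not strictly decrease. I expect to handle this by refining to a lexicographic complexity with secondary counter equal to the total number of arcs of $\gamma_- \cap B$ summed over all innermost bigons $B$ between $\gamma_+$ and $p(\tau_+)$; each outermost sub-bigon push strictly reduces this counter (removing $\alpha'$ from $\gamma_- \cap B$, with other innermost bigons unaffected since their interiors are disjoint from $B'$), so after finitely many pushes the first case applies and $c$ strictly decreases. An alternative and cleaner route is hyperbolic: equip $P \setminus L$ with a complete hyperbolic metric in which $l$ and the arcs of $p(\tau_+)$ are simultaneously geodesic (feasible since they are already in pairwise minimal position), and take $\gamma_\pm$ as the geodesic representatives of $[\partial D_\pm]$, which are automatically disjoint and in minimal position with $l$ and $p(\tau_+)$. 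The symmetric case of $|\gamma_- \cap l|$ being non-minimal is handled by swapping roles.
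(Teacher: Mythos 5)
Your reduction to a boundary-curve problem on $P\setminus L$ and your bigon/outermost-sub-bigon scheme match the paper's proof in structure, and you correctly isolate the one real difficulty: pushing $\gamma_-$ across the sub-bigon $B'$ changes $|\gamma_-\cap l|$ by $|\beta'\cap l|-|\alpha'\cap l|$. But your primary fix for it does not work. With a lexicographic complexity whose first entry is $c$, a sub-bigon push that increases $c$ while decreasing the secondary counter produces a lexicographically \emph{larger} pair, so it cannot be iterated from a lex-minimal configuration; and if you instead read the argument as ``do all the sub-bigon pushes, then the final push of $\gamma_+$ drops $c$,'' the drop of $2$ at the end is measured from the inflated intermediate value of $c$, which may exceed the initial minimum by more than $2$, so no contradiction results. (Your claim that the other innermost bigons are unaffected is also suspect: the pushed arc lands in the region on the far side of $\beta'$, which may itself be one of those bigons.) The point you are missing is that the obstacle never actually occurs, for a reason already built into the setup: $p(\tau_+)$ has been isotoped to meet $l$ minimally, so no arc of $l\cap B'$ can have both endpoints on $\beta'\subset p(\tau_+)$ (such an arc would cut off a bigon between $l$ and $p(\tau_+)$ inside $B'\subset P\setminus L$). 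Hence every arc of $l\cap B'$ has at least one endpoint on $\alpha'$, giving $|\beta'\cap l|\leq|\alpha'\cap l|$, and the sub-bigon push never increases $|\gamma_-\cap l|$. This is exactly how the paper argues: it reduces $|\partial D_+\cap p(\tau_+)|$ without increasing $|\partial D_-\cap l|$ and vice versa, so a plain monotone iteration terminates with both counts minimal.

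Your hyperbolic alternative is a legitimately different and essentially correct route, but it needs two caveats. First, $l$ is not a curve in $P\setminus L$: it contains $p(\tau_-)$ and hence passes through the $2n$ punctures, so ``make $l$ geodesic'' must mean realizing the ideal arcs $\delta_i$ and $p(t_-^j)$ (and the ideal arcs $p(t_+^i)$) as geodesics asymptotic to the cusps; this is standard but should be said. Second, if $[\partial D_+]=[\partial D_-]$ (which is not excluded when one only assumes $d(T_+,T_-)<2$), the geodesic representatives coincide rather than being disjoint, and you must perturb one off the other. With those repairs the geodesic argument gives the conclusion immediately; the paper's argument is more elementary and, as above, hinges on the single observation about minimal position of $p(\tau_+)$ and $l$ that your combinatorial route omits.
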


\begin{proof}
Note that any isotopy of $\partial D_\varepsilon $ in $P\setminus L$ can be realized by an isotopy of $D_\varepsilon $ in $B_\varepsilon \setminus \tau _\varepsilon $ for $\varepsilon =\pm $.

If $|\partial D_+\cap p(\tau _+)|$ is not minimal, there are a subarc of $\partial D_+$ and a subarc $\alpha $ of $p(\tau _+)$ cobounding a disk $\Delta _+$ in $P\setminus L$.
Since $D_+,D_-$ are disjoint, $\partial D_-\cap \Delta _+$ consists of arcs parallel into $\alpha $.
Let $\Delta _+^0$ be an outermost disk of the parallelisms.
By assumption, $p(\tau _+)$ has minimal intersection with $l$ and so no component of $l\cap \Delta _+^0$ has both end points on $\alpha $.
By an isotopy of $\partial D_-$ across $\Delta _+^0$, we can reduce $|\partial D_-\cap \Delta _+|$ without increasing $|\partial D_-\cap l|$.
After pushing out $\partial D_-$ from $\Delta _+$ in this way, we can reduce $|\partial D_+\cap p(\tau _+)|$ by an isotopy of $\partial D_+$ across $\Delta _+$.

If $|\partial D_-\cap l|$ is not minimal, there are a subarc of $\partial D_-$ and a subarc $\beta $ of $l$ cobounding a disk $\Delta _-$ in $P\setminus L$.
The intersection $\partial D_+\cap \Delta _-$ consists of arcs parallel into $\beta $.
Let $\Delta _-^0$ be an outermost disk of the parallelisms.
By the minimality of $|l\cap p(\tau _+)|$, no component of $p(\tau _+)\cap \Delta _-^0$ has both end points at $\beta $.
By an isotopy of $\partial D_+$ across $\Delta _-^0$, we can reduce $|\partial D_+\cap \Delta _-|$ without increasing $|\partial D_+\cap p(\tau _+)|$.
After pushing out $\partial D_+$ from $\Delta _-$ in this way, we can reduce $|\partial D_-\cap l|$ by an isotopy of $\partial D_-$ across $\Delta _-$.
\end{proof}

Theorem \ref{main} implies that the $3$-bridge decomposition in Figure \ref{fig_example} has distance at least two.
Since we have shown that it is at most two, the distance is exactly two.
We can work out in this way fairly many $n$-bridge decompositions, especially for $n=3$.

\subsection*{Acknowledgement}

I would like to thank Jang~Yeonhee for giving me the main question of this work and helpful conversations.
I would like to thank\break Ken'ichi~Ohshika for all his help as a mentor.
I would also like to thank\break  Makoto~Ozawa and Makoto~Sakuma for valuable comments and suggestions.

\end{document}